\title{Hochschild homology and trivial extensions}
\author{Petter Andreas Bergh}
\address{Department of Mathematical Sciences, NTNU, NO-7491 Trondheim, Norway}
\email{bergh@math.ntnu.no}
\author{Dag Oskar Madsen}
\address{Faculty of Professional Studies, University of Nordland, NO-8049 Bod{\o}, Norway}
\email{Dag.Oskar.Madsen@uin.no}
\newtheorem{lem}{Lemma}[section]
\newtheorem{prop}[lem]{Proposition}
\newtheorem{cor}[lem]{Corollary}
\newtheorem{thm}[lem]{Theorem}
\theoremstyle{definition}
\newtheorem{example}[lem]{Example}
\newtheorem{remark}[lem]{Remark}
\newcommand{\Ae}{A^{\mathrm e}}
\newcommand{\wQ}{\widetilde Q}
\renewcommand{\r}{\mathfrak r}
\newcommand{\Z}{\mathbb Z}
\newcommand{\hhdim}{\operatorname{HHdim}}
\newcommand{\Hom}{\operatorname{Hom}}
\newcommand{\HH}{\operatorname{HH}}
\renewcommand{\ker}{\operatorname{ker}}
\newcommand{\op}{\operatorname{op}}
\newcommand{\rad}{\operatorname{rad}}
\newcommand{\soc}{\operatorname{soc}}
\renewcommand{\top}{\operatorname{top}}
\begin{document}

\begin{abstract}
We prove that if an algebra is either selfinjective, local or graded, then the Hochschild homology dimension of its trivial extension is infinite.
\end{abstract}

\subjclass[2010]{16E40, 16S70}

\keywords{Hochschild homology, trivial extensions}

\maketitle

\section{Introduction}

Let $A$ be a finite dimensional algebra over an algebraically closed field. It is well known that if its global dimension is finite, then its Hochschild cohomology and homology groups $\HH_n(A)$ and $\HH^n(A)$ vanish for all sufficiently large $n$. In his classical paper \cite{Ques} on the cohomology of path algebras, Happel remarked that the converse was not known for Hochschild cohomology. As shown in \cite{AvramovIyengar}, it does hold for commutative algebras. However, noncommutative counterexamples were given in \cite{Bgms}: there exist algebras of infinite global dimension for which the Hochschild cohomology groups vanish in high degrees.

The Hochschild \emph{homology} groups of the algebras studied in \cite{Bgms} do not behave as the cohomology groups; they do not vanish in high degrees. This led Han to conjecture in \cite{Hdim} that if the Hochschild homology groups of an algebra vanish in high degrees, then the algebra is necessarily of finite global dimension. Han proved that this holds for monomial algebras, and just as for cohomology it also holds for commutative algebras, by \cite{AvramovPoirrier}. In the subsequent papers \cite{Trun, Glob, Split, Svp}, the conjecture has been shown to hold for several classes of algebras, including Koszul algebras, cellular algebras and local graded algebras.

In this paper, we study the Hochschild homology groups of trivial extensions of algebras. The trivial extension of any algebra is a non-semisimple symmetric algebra, and therefore it always has infinite global dimension. For symmetric algebras, the vector space dimensions of the Hochschild homology groups equal the dimensions of the cohomology groups. Thus, for trivial extension algebras, Han's conjecture states that the Hochschild homology \emph{and} cohomology groups do not all vanish in high degrees. We prove that this holds for trivial extensions of selfinjective algebras, local algebras and graded algebras.

\section{Trivial extension algebras}

Throughout this paper, let $\Bbbk$ be an algebraically closed field, and $A$ a finite dimensional $\Bbbk$-algebra. Any such algebra is Morita equivalent to a basic algebra, and these again are isomorphic to quotients of path algebras by admissible ideals. Thus we may without loss of generality assume that our algebra $A$ is of the form $\Bbbk Q/I$ for some finite quiver $Q$ and admissible ideal $I \subseteq \Bbbk Q$.

Denote by $DA$ the $\Bbbk$-dual $\Hom_{\Bbbk}(A, \Bbbk )$ of $A$, considered as a bimodule. The \emph{trivial extension} of $A$ by $DA$, denoted $T(A)=A \ltimes DA$, is the algebra with underlying vector space $A \oplus DA$, and multiplication given by
$$(a,f) \cdot (b,g)=(ab,ag+fb)$$ for all $a,b \in A$ and $f,g \in DA$. For any finite dimensional algebra, the trivial extension is symmetric. Moreover, there is a close relationship between the quiver $\wQ$ of $T(A)$ and the defining quiver $Q$ of $A$; we recall here this relationship as described in \cite{Pres}.

The radical of $T(A)$ is $\r \oplus DA$, where $\r$ is the radical of $A$. Consequently, the quotient $T(A)/ \rad T(A)$ is isomorphic to $A/\r$, from which it follows that the sizes of the complete sets of primitive orthogonal idempotents of $A$ and $T(A)$ are the same. This means $Q$ and $\wQ$ have the same number of vertices.

Next, consider the square of the radical of $T(A)$. It is given by
$$\rad^2 T(A)=\r^2 \oplus (\mathfrak r DA + DA \mathfrak r),$$
and so
$$\rad T(A)/\rad^2 T(A)=\r/\r^2 \oplus DA/(\mathfrak r DA + DA \mathfrak r).$$
The quotient $DA/(\mathfrak r DA + DA \mathfrak r)$ is isomorphic to $D(\soc_{\Ae} A)$, where $\Ae$ denotes the enveloping algebra $A \otimes_{\Bbbk} A^{\op}$ of $A$, and $\soc_{\Ae} A$ denotes the socle of the $A$-bimodule $A$. Consequently, there is an isomorphism
$$\rad T(A)/\rad^2 T(A) \simeq \r/\r^2 \oplus D(\soc_{\Ae} A)$$
of $A$-bimodules. The presence of the summand $\r/\r^2$ shows that the arrows of $Q$ can be regarded as arrows also of $\widetilde Q$, whereas the summand $D(\soc_{\Ae} A)$ represents additional arrows. The number of such additional arrows in $\wQ$ from a vertex $i$ to a vertex $j$ is equal to $\dim_\Bbbk e_j (D(\soc_{\Ae} A)) e_i$, where $e_i$ denotes the primitive idempotent in $A$ corresponding to the vertex $i$ in the quiver $Q$. Let $\wQ^+$ denote the collection of all new arrows; this set is non-empty since $D(\soc_{\Ae} A) \neq 0$. If $Q_1$ denotes the set of arrows of $Q$ and $\wQ_1$ denotes the set of arrows of $\wQ$, then $\wQ_1=Q_1 \cup \wQ^+$.

Denote by $\xi$ the surjection $DA \rightarrow D(\soc_{\Ae} A)$. For every arrow $\beta \in \wQ^+$ from $i$ to $j$, choose an element $x_\beta \in e_j (DA) e_i$ such that the set
$$\{ \xi(x_\beta) \mid \beta \in \wQ^+, \beta \colon i \rightarrow j \}$$
forms a $\Bbbk$-basis for $e_j (D(\soc_{\Ae} A)) e_i$. Now define a surjective ring homomorphism $\phi \colon \Bbbk \wQ \rightarrow T(A)$ as follows. For a primitive idempotent $e_i$ we set $\phi(e_i)=(e_i,0)$, while $\phi(\alpha)=(\alpha,0)$ for all $\alpha \in Q_1$. For all $\beta \in \wQ^+$ we set $\phi(\beta)=(0,x_\beta)$. Then the kernel $\widetilde I$ of $\phi$ is an admissible ideal in $\Bbbk \wQ$, and $T(A) \simeq \Bbbk \wQ/ \widetilde I$. While the quiver $\wQ$ is an invariant of the algebra $T(A)$, the ideal $\widetilde I$ may depend on the choices made; the algebra might admit several presentations as a bounded path algebra. Nevertheless, the following always holds.

\begin{lem}\label{comp}
Suppose $\beta_1$ and $\beta_2$ are arrows in $\wQ^+$. Then $\beta_2 \beta_1 \in \widetilde I$.
\end{lem}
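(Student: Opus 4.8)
The plan is to compute $\phi(\beta_2 \beta_1)$ directly and observe that it vanishes, so that $\beta_2 \beta_1$ lies in $\ker \phi = \widetilde I$. First I would dispose of the trivial case: if $\beta_1$ and $\beta_2$ are not composable as a path in $\wQ$, i.e.\ the target of $\beta_1$ is not the source of $\beta_2$, then $\beta_2 \beta_1 = 0$ already in $\Bbbk \wQ$ and there is nothing to prove. So we may assume $\beta_1 \colon i \to j$ and $\beta_2 \colon j \to k$ for vertices $i,j,k$, and then $\beta_2 \beta_1$ is a genuine length-two path.

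Next I would unwind the definition of $\phi$ on the new arrows. By construction $\phi(\beta_1) = (0, x_{\beta_1})$ and $\phi(\beta_2) = (0, x_{\beta_2})$, where $x_{\beta_1} \in e_j (DA) e_i$ and $x_{\beta_2} \in e_k (DA) e_j$. Since $\phi$ is a ring homomorphism, $\phi(\beta_2 \beta_1) = \phi(\beta_2) \phi(\beta_1) = (0, x_{\beta_2}) \cdot (0, x_{\beta_1})$. Applying the multiplication rule $(a,f)(b,g) = (ab, ag + fb)$ of $T(A) = A \ltimes DA$ with $a = b = 0$ yields $(0, x_{\beta_2})(0, x_{\beta_1}) = (0, 0)$; in other words, $DA \cdot DA = 0$ in the trivial extension. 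Hence $\phi(\beta_2 \beta_1) = 0$, which gives $\beta_2 \beta_1 \in \widetilde I$ as claimed.

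There is essentially no obstacle here: the statement is an immediate consequence of the fact that $DA$ squares to zero in $T(A)$, combined with the explicit formula for $\phi$ on the arrows of $\wQ^+$. The only points that require a moment's attention are the bookkeeping of the idempotents under which the elements $x_{\beta_i}$ sit (so that the product $x_{\beta_2} x_{\beta_1}$ is the one appearing in the multiplication) and the degenerate case of non-composable arrows, both of which are routine.
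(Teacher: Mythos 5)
Your proof is correct and follows essentially the same route as the paper: apply the homomorphism $\phi$, observe that $(0,x_{\beta_2})\cdot(0,x_{\beta_1})=0$ because $DA$ squares to zero in $T(A)$, and conclude $\beta_2\beta_1\in\ker\phi=\widetilde I$. The extra remarks about non-composable arrows and idempotent bookkeeping are harmless but not needed.
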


\begin{proof}
Using the ring homomorphism $\phi \colon \Bbbk \wQ \rightarrow T(A)$, we obtain
$$\phi(\beta_2 \beta_1)=\phi(\beta_2) \cdot \phi(\beta_1)=(0,x_{\beta_2}) \cdot (0,x_{\beta_1})=0,$$
where $x_{\beta_1}, x_{\beta_2} \in DA$. Hence $\beta_2 \beta_1 \in \ker \phi = \widetilde I$.
\end{proof}

\section{Hochschild homology dimension}

In this main section we prove that the Hochschild homology groups of the trivial extension algebra $T(A)$ of $A$ do not all vanish in high degrees, provided the algebra $A$ is either local, selfinjective or graded. We define the \emph{Hochschild homology dimension} of an algebra $B$ as
$$\hhdim B \stackrel{\text{def}}{=} \sup \{ n \mid \HH_n(B) \neq 0 \},$$
and so what we shall prove is that $\hhdim T(A) = \infty$ when $A$ is either local, selfinjective or graded.

We recall first a key result from \cite{Trun}, which shows that the existence of a certain kind of cycle in the quiver of an algebra forces the Hochschild homology dimension to be unbounded. Let $\Gamma$ be a finite quiver, $J$ an admissible ideal in $\Bbbk \Gamma$, and consider the algebra $\Bbbk \Gamma / J$. A \emph{cycle} in $\Gamma$ is a path $p$ of length at least one, starting and ending at the same vertex. Let $p=\alpha_n \dots \alpha_2 \alpha_1$ be a cycle in $\Gamma$ with $\alpha_i \in \Gamma_1$ for all $1 \leq i \leq n$. We say that $p$ is a \emph{$2$-truncated cycle} in the algebra $\Bbbk \Gamma /J$ if $\alpha_{i+1} \alpha_i \in J$ for all $1 \leq i \leq n-1$ and also $\alpha_1 \alpha_n \in J$.

\begin{thm}[{\cite[Theorem 3.1]{Trun}}]\label{bhm}
Let $\Gamma$ be a finite quiver and $J$ an admissible ideal in $\Bbbk \Gamma$. If $\Bbbk \Gamma/J$ admits a $2$-truncated cycle, then $\hhdim \Bbbk \Gamma/J = \infty$.
\end{thm}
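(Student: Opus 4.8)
The plan is to reduce to the case of a radical-square-zero algebra and then to write down explicit non-zero Hochschild homology classes there. Write $A=\Bbbk\Gamma/J$, and let $p=\alpha_n\cdots\alpha_1$ be a $2$-truncated cycle in $A$, passing through vertices $v_1,\dots,v_n$ with $\alpha_i$ an arrow from $v_i$ to $v_{i+1}$ and indices read modulo $n$; thus $\alpha_{i+1}\alpha_i\in J$ for every $i$. We may assume $p$ is primitive, that is, not a proper power of a shorter cycle, since a primitive cycle underlying $p$ is again $2$-truncated. Let $\mathfrak a\subseteq\Bbbk\Gamma$ be the arrow ideal and set $C:=\Bbbk\Gamma/\mathfrak a^2$, a finite-dimensional algebra with radical square zero whose quiver is again $\Gamma$, and which therefore still contains the oriented cycle $p$. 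Since $J$ is admissible we have $J\subseteq\mathfrak a^2$, so there is a canonical surjective algebra homomorphism $\pi\colon A\twoheadrightarrow C$. Note that $C$ has infinite global dimension.

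The main work is to prove $\hhdim C=\infty$. Because $C$ has radical square zero it admits an explicit minimal projective bimodule resolution, the $m$-th term of which is $\bigoplus_\gamma Ce_{t(\gamma)}\otimes_\Bbbk e_{s(\gamma)}C$ with the sum ranging over the paths $\gamma$ of length $m$ in $\Gamma$, where $s(\gamma)$ and $t(\gamma)$ are the source and target. Applying $C\otimes_{C^{\mathrm e}}-$ turns the $m$-th term into $\bigoplus_\gamma e_{s(\gamma)}Ce_{t(\gamma)}$, whose summand indexed by $\gamma$ is non-zero exactly when $\gamma$ is a closed walk or when there is an arrow from $t(\gamma)$ to $s(\gamma)$. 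Traversing $p$ a total of $k$ times gives a closed walk $p^{k}$ of length $kn$, so these complexes have non-zero terms in arbitrarily high degree. One then checks that the differentials cannot annihilate all of the homology classes coming from the $p^{k}$: the relevant subquotient complexes are finite-dimensional and amenable to direct computation, the model case being $n=1$, in which $C=\Bbbk[x]/(x^2)$ has a $2$-periodic bimodule resolution and one computes $\HH_m(C)\neq0$ for every $m$ (in any characteristic). In this way one obtains $\HH_{kn-1}(C)\neq0$ for every $k\geq1$; this is essentially the radical-square-zero case of Han's conjecture.

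It remains to transport the non-vanishing back to $A$. A ring surjection need not be surjective on Hochschild homology, so instead one exhibits explicit cycles. Working with the Hochschild complex whose $m$-th term is $A^{\otimes(m+1)}$, consider for each $k\geq1$
\[
\omega_k=(\alpha_n\otimes\alpha_{n-1}\otimes\cdots\otimes\alpha_1)^{\otimes k}\ \in\ A^{\otimes kn}.
\]
Every product of two consecutive tensor factors of $\omega_k$, including the cyclic one (the last factor $\alpha_1$ times the first factor $\alpha_n$), equals one of the products $\alpha_{i+1}\alpha_i$ and hence vanishes in $A$; therefore the Hochschild differential annihilates $\omega_k$, so $\omega_k$ represents a class in $\HH_{kn-1}(A)$. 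Its image under $\pi$ is the corresponding cycle in the Hochschild complex of $C$, which represents the non-zero class attached to the closed walk $p^{k}$ found above. Since Hochschild homology is functorial for unital algebra homomorphisms, the class of $\omega_k$ in $\HH_{kn-1}(A)$ is non-zero as well. Hence $\HH_{kn-1}(A)\neq0$ for all $k\geq1$, and so $\hhdim A=\infty$.

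The crux is the middle step, namely controlling the minimal bimodule resolution of $C$ well enough to see that the closed walks $p^{k}$ contribute genuinely non-trivial homology in infinitely many degrees, and not merely that the chain groups are non-zero there; the precise bookkeeping with the connecting differentials has a characteristic-dependent flavour, as the example $\HH_*(\Bbbk[x]/(x^N))$ already indicates. By comparison, the reduction to $C$ and the transport back along the cycles $\omega_k$ are formal once the relevant classes in $\HH_*(C)$ are known to be non-zero.
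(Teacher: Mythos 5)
Your outline has the right objects in it---the chains $\omega_k=(\alpha_n\otimes\cdots\otimes\alpha_1)^{\otimes k}$ are indeed cycles in the Hochschild complex, and the functoriality argument for transporting non-vanishing along $A\twoheadrightarrow C$ is sound---but the entire content of the theorem sits in the step you leave as ``one then checks that the differentials cannot annihilate all of the homology classes coming from the $p^k$,'' and that step is not only unproven but false in the generality you claim it. Take $A=C=\Bbbk[x]/(x^2)$ in characteristic zero, with the $2$-truncated cycle $p=x$ of length $n=1$. Then $\omega_2=x\otimes x$ is a boundary: $b(1\otimes x\otimes x)=x\otimes x-1\otimes x^2+x\otimes x=2(x\otimes x)$, so $[\omega_2]=0$ in $\HH_1$ (equivalently, $\HH_1\cong\Omega^1_{\Bbbk[x]/(x^2)}=\Bbbk\,dx$ and $x\,dx=\tfrac12 d(x^2)=0$). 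More generally, the element $e_{t(\alpha_n)}\otimes\alpha_n\otimes\cdots\otimes\alpha_1$ in degree $kn$ has boundary $\omega_k+(-1)^{kn}\sigma(\omega_k)$, where $\sigma$ is the cyclic rotation; iterating shows $[\omega_k]=(-1)^{n(kn+1)}[\omega_k]$, so for $n$ odd and $k$ even one gets $2[\omega_k]=0$, and the class dies whenever $2$ is invertible. So ``$[\pi(\omega_k)]\neq 0$ for every $k\geq 1$'' cannot be right; at best one can hope for infinitely many $k$, and identifying which $k$ survive, uniformly in the characteristic, is precisely the hard part. Your own closing paragraph concedes this, which means the proposal is a plan rather than a proof.

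The detour through the radical-square-zero quotient $C=\Bbbk\Gamma/\mathfrak a^2$ does not make this easier: you would still have to compute the differentials of the minimal bimodule resolution of $C$ on the summands indexed by the powers $p^k$ \emph{and} match the resulting generators against the images of the bar-complex cycles $\omega_k$ under a comparison map, with the same sign and characteristic issues as above. The cited source \cite{Trun} avoids both problems by working directly with $A$ in the bar complex relative to the separable vertex subalgebra and exhibiting, in infinitely many suitably chosen degrees, an explicit linear functional (essentially a signed sum over the cyclic rotations of ``the coefficient of $\omega_k$'') that vanishes on all boundaries but not on $\omega_k$; this is what makes the argument work over an arbitrary commutative coefficient ring. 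To repair your write-up you would need to (i) restrict to the degrees $kn-1$ for which the rotation sign is $+1$ (or otherwise control the cyclic symmetry, including the case where $p$ is a power of a shorter cycle), and (ii) actually produce the boundary-annihilating functional or carry out the finite-dimensional computation you defer; neither is routine bookkeeping.
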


In \cite{Trun}, this result was actually proved in a more general setting. Namely, the coefficients need not be taken from a field, they can be taken from any commutative ring. Moreover, the quiver need not be finite, it only has to have a finite number of vertices.

We are now ready to prove our three results, proving that $\hhdim T(A) = \infty$ whenever $A$ is local, selfinjective or graded. We divide these three cases into separate subsections. Recall that we may without loss of generality assume that $A$ is the quotient $\Bbbk Q/I$ of a path algebra, where $Q$ is a finite quiver an $I \subseteq \Bbbk Q$ an admissible ideal. We keep the notation from the previous section, in particular $\wQ$ denotes the quiver of $T(A)$.

\subsection{The local case} When the algebra $A$ is local, the proof is short since the underlying quiver contains only a single vertex.

\begin{thm}
If $A$ is a local finite dimensional $\Bbbk$-algebra, then $\hhdim T(A)= \infty$.
\end{thm}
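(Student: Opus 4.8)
The plan is to invoke Theorem~\ref{bhm}: it suffices to exhibit a $2$-truncated cycle in the quiver $\wQ$ of $T(A)$. Since $A$ is local, the quiver $Q$ has exactly one vertex, and therefore $\wQ$ also has exactly one vertex; call it $1$. Every arrow of $\wQ$ is then automatically a cycle (a loop at $1$), so the only thing that needs checking is the vanishing condition defining a $2$-truncated cycle.

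First I would recall that $\wQ^+ \neq \varnothing$, since $D(\soc_{\Ae} A) \neq 0$; this was noted in the construction in Section~2. Pick any arrow $\beta \in \wQ^+$. Because $\wQ$ has only one vertex, $\beta$ is a loop, so the length-one path $p = \beta$ is a cycle. To verify that $p$ is a $2$-truncated cycle in $T(A) \simeq \Bbbk\wQ/\widetilde I$, the defining conditions reduce to the single requirement $\beta \beta \in \widetilde I$ (the chain conditions $\alpha_{i+1}\alpha_i \in \widetilde I$ for $1 \le i \le n-1$ are vacuous when $n = 1$, and the wrap-around condition $\alpha_1 \alpha_n \in \widetilde I$ becomes exactly $\beta\beta \in \widetilde I$). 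But $\beta \in \wQ^+$, so taking $\beta_1 = \beta_2 = \beta$ in Lemma~\ref{comp} gives $\beta\beta \in \widetilde I$ immediately.

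Thus $\beta$ is a $2$-truncated cycle in $T(A)$, and Theorem~\ref{bhm} yields $\hhdim T(A) = \infty$. I do not anticipate any genuine obstacle here: the entire argument is a bookkeeping check that the construction of Section~2 plus Lemma~\ref{comp} feeds directly into the hypothesis of Theorem~\ref{bhm}. The only mild subtlety worth stating explicitly is that a single loop really does count as a cycle of length one and that the $2$-truncated-cycle conditions collapse correctly in that degenerate case — which is precisely why the local case is "short" as the subsection heading promises. The harder work is deferred to the selfinjective and graded cases, where the quiver has more than one vertex and one must actually locate a cycle all of whose consecutive compositions (and the closing composition) land in $\widetilde I$.
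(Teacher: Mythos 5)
Your proof is correct and follows exactly the paper's argument: since $\wQ$ has a single vertex, any arrow $\beta \in \wQ^+$ is a loop, Lemma~\ref{comp} with $\beta_1=\beta_2=\beta$ gives $\beta\beta \in \widetilde I$, so $\beta$ is a $2$-truncated cycle and Theorem~\ref{bhm} applies. The only difference is that you spell out the degenerate length-one case of the $2$-truncated-cycle definition, which the paper leaves implicit.
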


\begin{proof}
The quiver $Q$, and therefore also $\wQ$, has only one vertex. Let $\beta$ be an arrow in $\wQ^+$. By Lemma \ref{comp}, the path $p=\beta$ is a $2$-truncated cycle in $\Bbbk \wQ/ \widetilde I$, and this algebra is isomorphic to $T(A)$. Hence $\hhdim T(A)= \infty$ by Theorem \ref{bhm}.
\end{proof}

\subsection{The selfinjective case} Next, we treat the case when the algebra $A$ is selfinjective. Before the proof, we establish the following result, showing that every vertex in the quiver $\wQ$ of $T(A)$ is the target of at least one new arrow. That is, given any vertex, there exists one arrow in $\wQ$ which does not correspond to an arrow in the quiver $Q$ of $A$, and having the vertex as target. We denote by $r$ the number of vertices in the quiver $Q$.

\begin{prop}\label{self}
Suppose that $A$ is a selfinjective $\Bbbk$-algebra. Then for every $1 \leq i \leq r$, there is an arrow in $\wQ^+$ ending at vertex $i$.
\end{prop}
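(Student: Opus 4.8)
The plan is to reformulate the statement as a claim about the one–sided socle of $A$, and then feed in the structure theory of selfinjective algebras.

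First I would use the description of $\wQ$ recalled in the previous section: the number of arrows of $\wQ^+$ ending at a vertex $i$ equals $\sum_{k}\dim_\Bbbk e_iD(\soc_{\Ae}A)e_k=\dim_\Bbbk e_iD(\soc_{\Ae}A)$. Since the $\Bbbk$-dual of a bimodule interchanges its left and right structures, for any $A$-bimodule $M$ there is an isomorphism $e_iD(M)\cong D(Me_i)$; hence $e_iD(\soc_{\Ae}A)\neq0$ exactly when $(\soc_{\Ae}A)e_i\neq0$. So it is enough to show that $(\soc_{\Ae}A)e_i\neq0$ for every $i$ when $A$ is selfinjective.

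Next I would identify $\soc_{\Ae}A$. An element $a\in A$ is annihilated by $\rad\Ae$ precisely when $\r a=0$ and $a\r=0$, so $\soc_{\Ae}A=\soc({}_AA)\cap\soc(A_A)$, the intersection inside $A$ of the left socle and the right socle. This is where the hypothesis enters: for a selfinjective algebra the two one–sided socles coincide, so $\soc_{\Ae}A$ is just the (two–sided) socle of $A$. Decomposing ${}_AA=\bigoplus_i Ae_i$ and using that each indecomposable projective $Ae_i$ is injective, hence has a simple (in particular nonzero) socle, we obtain $\soc_{\Ae}A=\soc({}_AA)=\bigoplus_i\soc(Ae_i)$. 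Since $\soc(Ae_i)\subseteq Ae_i$ is killed on the right by every idempotent $e_k$ with $k\neq i$, it follows that $(\soc_{\Ae}A)e_i=\soc(Ae_i)\neq0$, which is exactly what is needed.

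The one point requiring care is the inclusion $\soc({}_AA)\subseteq\soc(A_A)$ for selfinjective $A$ used above (equivalently, that $\r a=0$ forces $a\r=0$); this is classical, but a direct argument runs as follows. Pick $0\neq a\in\soc(Ae_t)$; since this socle is simple it is concentrated at one vertex, say $a\in e_sAe_t$, and $Aa=\Bbbk a$. Then $a\r$ is again a left submodule of $A$ (because $A(a\r)=(Aa)\r=a\r$), it is annihilated on the left by $\r$, and it lies in $e_sA$; comparing with the block decomposition of $\soc({}_AA)$ yields $a\r\subseteq\soc(Ae_t)=\Bbbk a$. As $a\r\subseteq\r$ while $a$ generates a one–dimensional module, a short nilpotency argument then forces $a\r=0$. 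I expect verifying this socle identity — or simply locating a clean reference for it — to be the only real obstacle; the remaining ingredients (the duality isomorphism, the socle decomposition, and the fact that indecomposable projectives over a selfinjective algebra have simple socle) are standard.
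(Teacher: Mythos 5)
Your argument is correct in outline, and it takes a genuinely different route from the paper's. You reduce the statement to the inclusion $\soc({}_AA) \subseteq \soc(A_A)$, which together with the identification $\soc_{\Ae}A = \soc({}_AA) \cap \soc(A_A)$ gives $\soc_{\Ae}A = \soc({}_AA) = \bigoplus_j \soc(Ae_j)$ and hence $(\soc_{\Ae}A)e_i = \soc(Ae_i) \neq 0$; this is exactly the weaker hypothesis that the paper's remark after the proposition identifies as the true content of the result ("$\soc_AA \subseteq \soc_{\Ae}A$"). Where you differ is in how selfinjectivity enters: you appeal to the classical theorem that the left and right socles of a selfinjective (quasi-Frobenius) algebra coincide, which is citable and makes the proof short; the paper instead avoids that theorem entirely by a self-contained Loewy-length computation. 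From $Ae_i \simeq D(e_{\pi(i)}A)$ the two modules share a Loewy length $L_i$, a nonzero $x \in \r^{L_i-1}e_i = \soc_A Ae_i \simeq D(\top_{A^{\op}}e_{\pi(i)}A)$ satisfies $x = e_{\pi(i)}xe_i$, so $x \in e_{\pi(i)}\r^{L_i-1} = \soc_{A^{\op}}e_{\pi(i)}A$ as well, and therefore $x \in \soc_{\Ae}A$. Your version buys brevity and a clean conceptual reduction at the cost of an external reference; the paper's buys self-containedness.

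The one place your write-up has a genuine gap is the sketched direct proof of the socle identity, at the step "comparing with the block decomposition of $\soc({}_AA)$ yields $a\r \subseteq \soc(Ae_t)$". What you actually know at that point is $a\r \subseteq e_sA \cap \soc({}_AA) = \bigoplus_j e_s\soc(Ae_j)$, and $e_s\soc(Ae_j) \neq 0$ precisely when $\soc(Ae_j)$ is the simple at vertex $s$; to conclude that only $j=t$ contributes you need the assignment $j \mapsto (\text{vertex supporting } \soc(Ae_j))$ to be injective. That is the statement that the Nakayama map is a permutation — true for selfinjective algebras, but a fact of essentially the same depth as the socle identity you are deriving, so as written the sketch comes close to assuming what it proves. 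The gap is fillable: from $Ae_j \simeq D(e_{\pi(j)}A)$ one gets $\soc(Ae_j) \simeq D(\top_{A^{\op}}e_{\pi(j)}A) \simeq S_{\pi(j)}$ with $\pi$ a permutation, which gives the needed injectivity (and the rest of your sketch — the nilpotency argument forcing $a\r = 0$ — is fine). Alternatively, simply cite the classical result, or substitute the paper's Loewy-length argument, which sidesteps the issue.
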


\begin{proof}
Since $A$ is selfinjective, there exists a permutation
$$\pi \colon \{1, \dots, r\} \rightarrow \{1, \dots, r\}$$
of the vertices such that $Ae_i \simeq D(e_{\pi(i)}A)$ for all $1 \leq i \leq r$. So for any given $i$, the Loewy length of the left $A$-module $Ae_i$ is equal to the Loewy length of the right $A$-module $e_{\pi(i)}A$. Denote this common length by $L_i$, and choose a nonzero element $x \in \r^{L_i -1} e_i$. Now $\r^{L_i -1} e_i$ equals $\soc_A Ae_i$, which in turn is isomorphic to $D(\top_{A^{\op}} e_{\pi(i)}A)$, where $\top_{A^{\op}} e_{\pi(i)}A$ denotes the top of the right $A$-module $e_{\pi(i)}A$.
Therefore $x=e_{\pi(i)} x e_i$, giving
$$x \in e_{\pi(i)} \r^{L_i -1}= \soc_{A^{\op}} e_{\pi(i)}A.$$

Let $\alpha$ be any arrow in $Q_1$. Then $\alpha x=0$ since $x$ belongs to the socle of the left $A$-module $Ae_i$, and $x \alpha =0$ since $x$ belongs to the socle of the right $A$-module $e_{\pi(i)}A$. Thus $x$ is an element of $\soc_{\Ae} A$, showing that $e_{\pi(i)} (\soc_{\Ae} A) e_i$ is nonzero. But then $e_i \left ( D(\soc_{\Ae} A) \right ) e_{\pi(i)}$ must also be nonzero, and so there exists an arrow in $\wQ^+$ from vertex $\pi(i)$ to $i$.
\end{proof}

\begin{remark}
As can be seen from the proof of this proposition, the result does not require the algebra $A$ to be selfinjective. Namely, the result holds under the weaker assumption that $\soc_A A \subseteq \soc_{\Ae} A$, that is, when the socle of $A$ as a left module is contained in its bimodule socle.
\end{remark}

Proposition \ref{self} guarantees that the quiver of the trivial extension of a selfinjective algebra contains ``enough'' arrows. This is what we need in order to prove the result on the Hochschild homology dimension for such algebras.

\begin{thm}
If $A$ is a selfinjective $\Bbbk$-algebra, then $\hhdim T(A)= \infty$.
\end{thm}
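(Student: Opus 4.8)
The plan is to combine Proposition~\ref{self} with Lemma~\ref{comp} and Theorem~\ref{bhm}. By Theorem~\ref{bhm}, it suffices to produce a $2$-truncated cycle in $\Bbbk \wQ / \widetilde I$, since that algebra is isomorphic to $T(A)$. The natural candidate is a cycle built entirely out of arrows in $\wQ^+$: if $p = \beta_n \cdots \beta_2 \beta_1$ with every $\beta_i \in \wQ^+$, then Lemma~\ref{comp} immediately gives $\beta_{i+1}\beta_i \in \widetilde I$ for all $i$ and also $\beta_1 \beta_n \in \widetilde I$, so $p$ is automatically a $2$-truncated cycle regardless of its length. Hence the entire problem reduces to a purely combinatorial question about the quiver: does the subquiver of $\wQ$ on all $r$ vertices with arrow set $\wQ^+$ contain a cycle?

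The key observation is that Proposition~\ref{self} supplies exactly the input needed for this. It tells us that every vertex $i \in \{1, \dots, r\}$ is the target of at least one arrow in $\wQ^+$. Consider the map sending each vertex $i$ to the source of some chosen arrow in $\wQ^+$ ending at $i$; this defines a function $f \colon \{1,\dots,r\} \to \{1,\dots,r\}$ on a finite set. Starting from any vertex and iterating $f$, the sequence $i, f(i), f(f(i)), \dots$ must eventually repeat a vertex, producing a sequence of vertices $v_1, v_2, \dots, v_k, v_1$ together with chosen arrows $\beta_j \in \wQ^+$ from $v_{j+1}$ to $v_j$ (indices cyclically), i.e.\ a closed walk in $\wQ$ all of whose arrows lie in $\wQ^+$. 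Reading these arrows in the appropriate order yields a path $p = \beta_k \cdots \beta_2 \beta_1$ that is a cycle in $\wQ$ with every constituent arrow in $\wQ^+$.

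Once this cycle $p$ is in hand, Lemma~\ref{comp} applies to each consecutive pair of arrows (including the wrap-around pair $\beta_1 \beta_k$), so $p$ is a $2$-truncated cycle in $\Bbbk \wQ / \widetilde I \simeq T(A)$, and Theorem~\ref{bhm} gives $\hhdim T(A) = \infty$. I do not anticipate a serious obstacle here: the only nontrivial ingredient, Proposition~\ref{self}, is already established, and the remaining argument is the standard pigeonhole fact that a finite directed graph in which every vertex has in-degree at least one must contain a directed cycle. The one point to state carefully is that the cycle produced may visit a vertex more than once in general; but since it is a genuine path in $\Bbbk \wQ$ (a concatenation of arrows) and Lemma~\ref{comp} constrains \emph{every} pair of composable arrows from $\wQ^+$, the $2$-truncated condition holds without needing the cycle to be simple.
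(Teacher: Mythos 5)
Your proposal is correct and follows essentially the same route as the paper: use Proposition~\ref{self} to find a cycle consisting entirely of arrows in $\wQ^+$, observe via Lemma~\ref{comp} that it is automatically a $2$-truncated cycle, and conclude with Theorem~\ref{bhm}. The only difference is that you spell out the pigeonhole argument for the existence of the cycle, which the paper leaves implicit.
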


\begin{proof}
From Proposition \ref{self}, we know that for any vertex in $\wQ$ there is at least one arrow in $\wQ^+$ ending at that vertex. It follows from this that there exists a cycle $p = \beta_t \dots \beta_2 \beta_1$ in $\wQ$ consisting entirely of arrows in $\wQ^+$. By Lemma \ref{comp}, the composition of any two such arrows is zero in $T(A)$, hence $p$ is a $2$-truncated cycle in the trivial extension algebra. Theorem \ref{bhm} now gives $\hhdim T(A)= \infty$.
\end{proof}

Since the trivial extension of any algebra is symmetric, we obtain the following.

\begin{cor}
If $A$ is any finite dimensional $\Bbbk$-algebra, then $\hhdim T \left ( T(A) \right ) = \infty$.
\end{cor}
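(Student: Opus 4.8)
The plan is to derive this corollary directly from the selfinjective case of the theorem, with essentially no new work. The key observation is the one already recorded in Section 2: for \emph{every} finite dimensional $\Bbbk$-algebra $A$, the trivial extension $T(A) = A \ltimes DA$ is a symmetric algebra, and in particular it is selfinjective.

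Given this, I would simply apply the preceding theorem with the algebra $T(A)$ in the role of $A$. Writing $B = T(A)$, the algebra $B$ is selfinjective, so that theorem yields $\hhdim T(B) = \infty$; since $T(B) = T \left ( T(A) \right )$, this is precisely the assertion of the corollary.

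The only point being invoked beyond the selfinjective theorem is the standard fact that a symmetric (hence Frobenius) algebra is selfinjective, which is already used implicitly in Section 2, so there is no real obstacle. If one preferred not to quote that theorem as a black box, the same conclusion follows by unwinding its proof for $B = T(A)$: a symmetric algebra has $\soc_B B = \soc_{B^{\mathrm e}} B$, so the remark following Proposition \ref{self} applies to $B$, every vertex of the quiver of $T(B)$ is the target of an arrow in the relevant $\wQ^+$, one extracts a cycle built entirely from such arrows, Lemma \ref{comp} makes it a $2$-truncated cycle in $T(B)$, and Theorem \ref{bhm} finishes. Either way the argument is immediate.
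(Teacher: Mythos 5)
Your argument is correct and is exactly the paper's: the corollary is stated immediately after the remark that the trivial extension of any algebra is symmetric, hence selfinjective, so the selfinjective theorem applies to $T(A)$ and gives $\hhdim T\left( T(A) \right) = \infty$. Nothing further is needed.
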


\subsection{The graded case} In this final subsection, we treat the case when the algebra $A$ is positively graded, so that $A=A_0 \oplus A_1 \oplus \dots \oplus A_s$. Many of the algebras one normally studies are gradable, and they are therefore covered by the result. However, our proof requires the characteristic of the ground field $\Bbbk$ to be zero, and the degree zero part $A_0$ of $A$ to be isomorphic to a product $\Bbbk \times \dots \times \Bbbk=\Bbbk^{\times r}$ as a $\Bbbk$-algebra. Let $1_A = e_1 + \dots + e_r$ be the corresponding decomposition of the identity.

\begin{example}
We do not require the generators of $A$ to be in degree $1$. For example, let $A$ be the path algebra $A=\Bbbk Q/I$, where $Q$ is the quiver
$$Q \colon
\xymatrix@C=0.8ex@R=4ex{&&\bullet_2 \ar[drr]^{\beta} \\ \bullet_1 \ar[urr]^{\alpha} \ar[dr]_{\gamma} &&  && \bullet_3\\ & \bullet_4 \ar[rr]_{\delta} && \bullet_5 \ar[ur]_{\varepsilon}}$$
and $I$ is the ideal in $\Bbbk Q$ generated by the single relation $\varepsilon \delta \gamma - \beta \alpha$. This ideal is not homogeneous with the path length grading, where every arrow is assigned the degree one. However, if we let $\deg \alpha=\deg \beta=3$ and $\deg \gamma=\deg \delta=\deg \varepsilon=2$, then $A$ is a positively graded algebra.

There are finite dimensional algebras that do not admit a positive grading with semisimple degree zero part, see \cite{Ungr} for examples.
\end{example}

Returning now to the general case, for $1 \leq l \leq s$, let $C^l_A$ be the $r \times r$-matrix with entries $(C^l_A)_{i,j}= \dim_{\Bbbk} e_j A_l e_i$. The \emph{graded Cartan matrix of $A$} is defined to be the $r \times r$-matrix
$$C_A(x) = C^0_A + C^1_A x + C^2_A x^2 + \dots + C^s_A x^s$$
with entries in $\Z[x]$. Its determinant $\det {C_A(x)}$ is the \emph{graded Cartan determinant of $A$}. The following result from \cite{Glob} establishes a connection between this determinant and the Hochschild homology dimension of $A$.

\begin{thm}[{\cite[Corollary 3.5]{Glob}}]\label{grad}
If $\det {C_A(x)} \neq 1$, then $\hhdim A= \infty$.
\end{thm}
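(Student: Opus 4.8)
The plan is to argue by contraposition: assuming $\hhdim A < \infty$, I will deduce that $\det C_A(x) = 1$. The starting point is purely numerical. Since $A_0 \simeq \Bbbk^{\times r}$ is semisimple, $C_A^0 = I_r$, so that $C_A(x) = I_r + M(x)$ with $M(x) = \sum_{l \geq 1} C_A^l x^l$ having entries in $x\Z[x]$; hence $\det C_A(x)$ is a polynomial with constant term $1$. I claim that $\det C_A(x) = 1$ precisely when the power series $\log \det C_A(x) = \operatorname{tr} \log(I_r + M(x)) = \sum_{n \geq 1} \frac{(-1)^{n-1}}{n} \operatorname{tr}\big( M(x)^n \big)$ is a polynomial. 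Indeed, if it were, then its derivative $\frac{d}{dx} \log \det C_A(x)$ would be a polynomial, forcing $\det C_A(x)$ to divide its own derivative in $\mathbb{Q}[x]$; by degree this is possible only when $\det C_A(x)$ is constant, hence equal to $1$. It therefore suffices to prove that $\hhdim A < \infty$ forces this trace series to terminate.

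To interpret the trace series homologically, I would first reduce, using $\ch \Bbbk = 0$ and the separability of $A_0 \simeq \Bbbk^{\times r}$, to the normalized bar complex taken relative to $A_0$, whose $n$-th term $A \otimes_{A_0} \bar A^{\otimes_{A_0} n}$ (with $\bar A = A_1 \oplus \dots \oplus A_s$ the positive part) is concentrated in internal degrees $\geq n$; thus each fixed internal degree $d$ sees only the finitely many terms with $n \leq d$, so every graded Euler characteristic below is a legitimate identity of power series in $x$. Note that the plain Hochschild Euler characteristic is useless here: because $C_A(x) = I_r + M(x)$ is invertible over $\Z[[x]]$, the alternating sum $\sum_n (-1)^n \dim_x \HH_n(A)$ collapses to the constant $r$ and leaves no trace of the determinant. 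The determinant is instead recorded by the cyclic refinement, where the entries of $M(x)$ are exactly the graded dimensions $\dim_x e_j \bar A e_i$ and $\operatorname{tr}\big( M(x)^n \big)$ counts the graded cyclic sequences of $n$ factors from $\bar A$. In characteristic zero the weights $\frac{1}{n}$ are furnished by the splitting of the cyclic group actions (the Hodge/$\lambda$-decomposition), and I would use this to identify $\sum_{n \geq 1} \frac{(-1)^{n-1}}{n} \operatorname{tr}\big( M(x)^n \big)$ with the graded Euler characteristic of reduced cyclic homology $\overline{HC}_\ast(A)$.

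Finally I would feed in the hypothesis through Connes' periodicity sequence $\cdots \to \HH_n(A) \to HC_n(A) \to HC_{n-2}(A) \to \HH_{n-1}(A) \to \cdots$: once $\HH_n(A) = 0$ for $n$ large, cyclic homology becomes $2$-periodic in high degrees, and combined with the internal-degree bound above this periodicity should force the graded Euler characteristic of $\overline{HC}_\ast(A)$ to be a genuine polynomial rather than an infinite series. By the second paragraph that polynomial is $\log \det C_A(x)$, and by the first paragraph this yields $\det C_A(x) = 1$, as required. The main obstacle is exactly this last transfer: turning finiteness of $\hhdim A$ into the termination of the cyclic Euler characteristic, and matching it coefficient-by-coefficient with $\log \det C_A(x)$. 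This is where both hypotheses are indispensable — $\ch \Bbbk = 0$ supplies the $\frac{1}{n}$-weighted Hodge decomposition and the clean Connes sequence, while $A_0 \simeq \Bbbk^{\times r}$ permits the reduction to the $A_0$-relative bar complex whose graded pieces are degreewise bounded.
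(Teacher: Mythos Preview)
This theorem is not proved in the present paper; it is quoted from \cite{Glob} (Corollary~3.5 there) and used as a black box in the graded subsection. There is therefore no in-paper proof to compare your attempt against.

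On the proposal itself: the opening reduction is fine --- $\det C_A(x)$ has constant term $1$, and if $\log\det C_A(x)$ were a polynomial then $\det C_A(x)$ would divide its own derivative in $\mathbb Q[x]$, forcing it to be the constant $1$. The argument breaks down, however, at the claimed identification of $\sum_{n\ge1}\tfrac{(-1)^{n-1}}{n}\operatorname{tr}\bigl(M(x)^n\bigr)$ with the graded Euler characteristic of $\overline{HC}_\ast(A)$. This cannot hold as stated: the graded Euler characteristic of any complex of $\Bbbk$-vector spaces lies in $\Z[[x]]$, whereas $\log\det C_A(x)$ typically does not --- already for $A=\Bbbk[\varepsilon]/(\varepsilon^2)$ with $\deg\varepsilon=1$ one has $\log(1+x)=x-\tfrac{1}{2}x^2+\tfrac{1}{3}x^3-\cdots\notin\Z[[x]]$. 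The weights $1/n$ are not produced by passing to cyclic coinvariants (that operation does not divide dimensions by $n$, since the $\mathbb Z/n$-action on $\bar A^{\otimes n}$ is almost never free), and the ``Hodge/$\lambda$-decomposition'' you invoke exists only for \emph{commutative} algebras, so it cannot supply them here either. The genuine link between $\log\det$ and cyclic phenomena runs through necklace/Witt-type combinatorics rather than a single Euler-characteristic identity. Thus the ``main obstacle'' you flag is not merely an unverified step but an incorrect formula, and the argument cannot be completed along the line you sketch without replacing that central identification.
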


Now we show how to give the trivial extension $T(A)$ of $A$ a positive grading, based on the grading of $A$. By definition, there is a vector space decomposition $T(A) = A \oplus DA$. We keep the original grading of $A$, so that
$$\deg A_l=l; \quad 1 \leq l \leq s,$$
and then we give $D(A)$ the following grading:
$$\deg D(A_l)=s+1-l; \quad 1 \leq l \leq s.$$
In this way, $T(A)$ becomes a positively graded $\Bbbk$-algebra with top degree $s+1$.

Next, we analyze the graded Cartan matrix
$$C_{T(A)}(x) = C^0_{T(A)} + C^1_{T(A)} x + C^2_{T(A)} x^2 + \dots + C^s_{T(A)}  x^s + C^{s+1}_{T(A)}  x^{s+1}$$
of $T(A)$. The matrices $C^0_{T(A)}$ and $C^{s+1}_{T(A)}$ must both be identity matrices, since
$$\dim_{\Bbbk} e_j (T(A)_0) e_i=\dim_{\Bbbk} e_j A_0 e_i= \delta_{ij}$$
and
$$\dim_{\Bbbk} e_j (T(A)_{s+1}) e_i=\dim_{\Bbbk} e_j D(A_0) e_i= \delta_{ij},$$
where $\delta_{ij}$ denotes the Kronecker delta. It follows from this that the graded Cartan matrix of $T(A)$ has the shape
$$C_{T(A)}(x)=
\begin{pmatrix}
1 + p_{1,1}(x) + x^{s+1} & p_{1,2}(x) & \cdots & p_{1,r}(x) \\
p_{2,1}(x) & 1 + p_{2,2}(x) + x^{s+1} & \cdots & p_{2,r}(x) \\
\vdots & \vdots & \ddots & \vdots \\
p_{r,1}(x) & p_{r,2}(x) & \cdots & 1 + p_{r,r}(x) + x^{s+1}
\end{pmatrix},
$$
where the entries $p_{i,j}(x)$, $1 \leq i,j \leq r$, are integer polynomials of degree at most $s$ and with zero constant term. This is the key ingredient when we now prove that the Hochschild homology dimension of $T(A)$ is infinite.

\begin{thm}
Suppose that the characteristic of $\Bbbk$ is zero, and let $A=A_0 \oplus A_1 \oplus \dots \oplus A_s$ be a positively graded finite dimensional $\Bbbk$-algebra, where $A_0$ is a product of copies of $\Bbbk$. Then $\hhdim T(A)= \infty$.
\end{thm}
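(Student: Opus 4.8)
The plan is to deduce the statement from Theorem \ref{grad}: it suffices to show that the graded Cartan determinant $\det C_{T(A)}(x)$ of the trivial extension is not equal to the constant polynomial $1$. The hypothesis $\ch \Bbbk = 0$ enters only at this last step, as it is what permits us to invoke Theorem \ref{grad}; the computation itself is characteristic-free.

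To establish $\det C_{T(A)}(x) \neq 1$, I would compute the degree of this polynomial from the explicit description of $C_{T(A)}(x)$ recorded just before the theorem, where the diagonal entries are $1 + p_{i,i}(x) + x^{s+1}$, the off-diagonal entries are $p_{i,j}(x)$, and every $p_{i,j}(x)$ has degree at most $s$ with zero constant term. Expanding the determinant over the symmetric group $S_r$, the summand attached to the identity permutation is $\prod_{i=1}^{r} (1 + p_{i,i}(x) + x^{s+1})$, which has degree exactly $r(s+1)$ and leading coefficient $1$, since each factor has degree exactly $s+1$. For any non-identity $\sigma \in S_r$, the number $k$ of fixed points of $\sigma$ is at most $r-2$, so the corresponding summand is a product of $k$ diagonal entries, each of degree at most $s+1$, and $r-k$ off-diagonal entries, each of degree at most $s$; its degree is thus at most $k(s+1) + (r-k)s = rs+k \leq r(s+1)-2$. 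Hence the top-degree term from the identity permutation is not cancelled, so $\deg \det C_{T(A)}(x) = r(s+1) \geq 1$, and in particular $\det C_{T(A)}(x) \neq 1$. Theorem \ref{grad} then yields $\hhdim T(A) = \infty$.

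I do not anticipate a real obstacle: the argument amounts to the degree bookkeeping above, and its one delicate point is verifying that the leading monomial $x^{r(s+1)}$ survives with nonzero coefficient, i.e.\ that the identity is the unique permutation attaining the maximal degree. This in turn rests on the elementary fact that a permutation of an $r$-element set fixing at least $r-1$ of the elements must be the identity. Everything with genuine mathematical content has already been supplied: the passage from the quiver of $A$ to that of $T(A)$, the resulting shape of the graded Cartan matrix $C_{T(A)}(x)$, and Theorem \ref{grad} relating the Cartan determinant to the Hochschild homology dimension.
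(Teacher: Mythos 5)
Your proposal is correct and follows essentially the same route as the paper: both reduce to Theorem \ref{grad} via the shape of $C_{T(A)}(x)$ and observe that the identity-permutation term $\prod_i (1+p_{i,i}(x)+x^{s+1})$ contributes the uncancelled leading monomial $x^{r(s+1)}$, forcing $\det C_{T(A)}(x)\neq 1$. The only cosmetic difference is that the paper also records that the constant term of the determinant is $1$, whereas you conclude directly from the positive degree; either observation suffices.
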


\begin{proof}
The product of the diagonal entries in the graded Cartan matrix $C_{T(A)}(x)$ is a monic polynomial of degree $r(s+1)$, and with constant term $1$. All other products in the expression for the determinant involve off-diagonal entries, so they have degrees less than $r(s+1)$ and zero constant term. Therefore the determinant is of the form $$\det C_{T(A)}(x)= 1 + \dots + x^{r(s+1)}.$$
Then $\hhdim T(A)= \infty$ by Theorem $\ref{grad}$.
\end{proof}


\begin{thebibliography}{BGMS}
\bibitem[AvI]{AvramovIyengar}L.L.\ Avramov, S.\ Iyengar, \emph{Gaps in Hochschild cohomology imply smoothness for commutative algebras}, Math.\ Res.\ Lett.\ 12 (2005), no. 5--6, 789--804.
\bibitem[AV-P]{AvramovPoirrier}L.L.\ Avramov, M.\ Vigu{\'e}-Poirrier, \emph{Hochschild homology criteria for smoothness}, Internat.\
Math.\ Res.\ Notices (1992), no.\ 1, 17--25.
\bibitem[BBFS]{Ungr} T. Belzner, W.D. Burgess, K.R. Fuller, R. Schulz, \emph{Examples of ungradable algebras}, Proc. Amer. Math. Soc. 114 (1992), no. 1, 1--4.
\bibitem[BHM]{Trun}P.A.\ Bergh, Y.\ Han, D.\ Madsen, \emph{Hochschild homology and truncated cycles}, Proc.\ Amer.\ Math.\ Soc.\ 140 (2012), no.\ 4, 1133--1139.
\bibitem[BM1]{Glob}P.A.\ Bergh, D.\ Madsen, \emph{Hochschild homology and global dimension}, Bull.\ London Math.\ Soc.\ 41 (2009), no.\ 3, 473--482.
\bibitem[BM2]{Split}P.A.\ Bergh, D.\ Madsen, \emph{Hochschild homology and split pairs}, Bull.\ Sci.\ Math.\ 134 (2010), 665--676.
\bibitem[BGMS]{Bgms}R.-O.\ Buchweitz, E.L.\ Green, D.\ Madsen, {\O}.\ Solberg, \emph{Finite Hochschild cohomology without finite global dimension}, Math.\ Res.\ Lett.\ 12 (2005), no.\ 5-6, 805--816.
\bibitem[FeP]{Pres}E.A.\ Fern{\'a}ndez, M.I.\ Platzeck, \emph{Presentations of trivial extensions of finite dimensional algebras and a theorem of Sheila Brenner}, J.\ Algebra 249 (2002), no.\ 2, 326--344.
\bibitem[Han]{Hdim}Y.\ Han, \emph{Hochschild (co)homology dimension}, J.\ London Math.\ Soc.\ (2) 73 (2006), no.\ 3, 657--668.
\bibitem[Hap]{Ques}D. Happel, \emph{Hochschild cohomology of finite-dimensional algebras}, S\'eminaire d'{A}lg\`ebre Paul Dubreil et Marie-Paul Malliavin, 39\`eme Ann\'ee ({P}aris, 1987/1988), 108--126, Lecture Notes in Math., 1404, Springer, Berlin, 1989.
\bibitem[SV-P]{Svp}A.\ Solotar, M.\ Vigu{\'e}-Poirrier, \emph{Two classes of algebras with infinite Hochschild homology}, Proc.\ Amer.\ Math.\ Soc.\ 138 (2010), no.\ 3, 861--869.
\end{thebibliography}
\end{document}